\tikzstyle{puntik}=[circle,draw=blue!50,fill=blue!20,thick]
\tikzstyle{sipka}=[->,shorten <=1pt,>=angle 90,semithick]
\tikzstyle{forced}=[->,shorten <=1pt,>=angle 90,semithick,dashed]
\tikzstyle{bigbad}=[line width=3pt] \tikzstyle{every
token}=[draw=blue!50,fill=blue!20,thick] \def\equals{=} 
\theoremstyle{plain} \newtheorem{theorem}{Theorem}
 \newtheorem{corollary}[theorem]{Corollary}
\newtheorem{observation}[theorem]{Observation}
\newtheorem{proposition}[theorem]{Proposition}
\theoremstyle{definition}
\newcommand\algA{{\mathbf A}} 
\newcommand\algB{{\mathbf B}}
\newcommand\A{{\mathbb A}} 
\newcommand\C{{\mathcal C}} 
\newcommand\Arrr{{\mathcal R}}
\newcommand\compNP{{\textsf{NP}}}
\newcommand\AND{\mathrel{\wedge}} 
\DeclareMathOperator\CSP{CSP}
\DeclareMathOperator\Inv{Inv} 
\DeclareMathOperator\Pol{Pol}
\newcommand\SDmeet{$\operatorname{SD}(\wedge)$}
\begin{document}

\title{CSP for binary conservative relational structures}

\author{Alexandr Kazda} 
\email{alex.kazda@gmail.com} 
\urladdr{} 
\address{{Department of Mathematics, 1326 Stevenson Center, Vanderbilt University,
Nashville, TN 37240, USA }} 

\thanks{The author would like to thank Libor Barto
  for motivating him, and the reviewer for pointing out how the final part of
the proof can be simplified. Supported by the Czech-Polish cooperation grant
7AMB13PL013 ``General algebra and applications'' and the GA\v CR project
13-01832S} 
\subjclass[2010]{08A02, 03C05, 68R05} 
\keywords{Meet semidistributivity, clone, constraint satisfaction problem, weak near
unanimity}

\begin{abstract} We prove that whenever $\A$ is a 3-conservative relational
  structure with only binary and unary relations, then the algebra of
  polymorphisms of $\A$ either has no Taylor operation (i.e.~$\CSP(\A)$ is
  $\compNP$-complete), or it generates an \SDmeet{} variety (i.e.~$\CSP(\A)$
has bounded width).  \end{abstract}

\maketitle

\section{Introduction}

In the last decade, the study of the complexity of the constraint satisfaction
problem (CSP) has produced several major results due to universal algebraic
methods (see e.g.~\cite{BJK}, \cite{larosetesson-hardness} and
\cite{barto-kozik-bw}).

In this paper, we continue in this direction and look at clones of
polymorphisms of finite 3-conservative relational structures with relations of
arity at most two (a generalization of conservative digraphs). We show that
whenever such a structure admits a Taylor operation, its operational clone
actually generates a meet semidistributive (\SDmeet{}) variety, so the
corresponding CSP problem is solvable by local consistency checking (also known
as the bounded width algorithm). Since relational structures without Taylor
operations yield $\compNP$-complete CSPs, we obtain a rather simple dichotomy of
CSP complexity in this case.

There have been numerous papers published on the behavior of conservative
relational structures. We have been mostly building on three previous results:
First, Andrei Bulatov proved in \cite{bulatov-conservative} the dichotomy of
CSP complexity for 3-conservative relational structures, for which Libor Barto recently
offered a simpler proof in \cite{libor-conservative}. Meanwhile, Pavol Hell and
Arash Rafiey obtained a combinatorial characterization of all tractable
conservative digraphs \cite{hell-rafiey}, and have observed that all tractable
digraphs must have bounded width. 

\section{Preliminaries}
A \emph{relational structure} $\A$ is any set $A$ together with a family of
\emph{basic relations} $\Arrr=\{R_i:i\in I\}$ where $R_i\subset A^{n_i}$. We will call the number
$n_i$ the \emph{arity} of $R_i$. As usual, we will consider only 
finite structures (and finitary relations) in this paper. 

Let $R$ be an $m$-ary relation and $f:A^n\to A$ an $n$-ary operation. We say
that \emph{$f$
preserves $R$} if whenever we have elements $a_{ij}\in A$ such that
\begin{align*}
(a_{11},a_{12},&\dots,a_{1m})\in R,\\
(a_{21},a_{22},&\dots,a_{2m})\in R,\\
&\vdots\\
(a_{n1},a_{n2},&\dots,a_{nm})\in R,
\end{align*}
then we also have
\[
\left(f(a_{11},\dots,a_{n1}),\dots,f(a_{1m},\dots,a_{nm})\right)\in R.
\]
If $\Arrr$ is a set of relations, then we denote by $\Pol(\Arrr)$ the set of all
operations on $A$ that preserve all $R\in\Arrr$. On the other hand, if $\Gamma$
is a set of operations on $A$, then we denote by $\Inv(\Gamma)$ the set of all
relations that are preserved by each operation $f\in \Gamma$.

One of the most important notions in CSP is the primitive positive definition. If
we have relations $R_1,\dots,R_k$ on $A$, then a relation $S$ on $A$ is
\emph{primitively positively defined} using $R_1,\dots, R_k$ if
there exists a logical formula defining $S$ that uses only conjunction, existential
quantification, symbols for variables, predicates $R_1,\dots,R_k$, and
the symbol for equality ``$=$''.

Observe that the set $\Pol(\Arrr)$ is closed under composition and contains all
the projections, therefore it is an \emph{operational clone}. If $\A=(A,\Arrr)$
is a relational structure, then $\Inv(\Pol(\Arrr))$ consists of precisely all
the relations that can be primitively positively defined using the relations
from $\Arrr$ (see the original works of Bodnarchuk~\cite{bodnarchuk} and
Geiger~\cite{geiger}, or the survey \cite{galois} for proof of this statement,
as well as a more detailed discussion of the correspondence between $\Pol$ and
$\Inv$). We will call $\Inv(\Pol(\Arrr))$ the \emph{relational clone} of $\A$. 

Given an algebra $\A$, an instance of the \emph{constraint satisfaction
problem} $\CSP(\A)$ consists of a set of variables $V$ and a set of constraints
$\C$ where each constraint $C=(S,R)$ has a \emph{scope} $S\subset V$ and a
\emph{relation} $R\subset A^S$ such that $R$ (after a suitable renaming of
variables) is either equality, or one of the basic relations of $\A$. A
\emph{solution} of this instance is any mapping $f:V\to A$ such that $f_{|S}\in
R$ for each constraint $(S,R)\in\C$. In this paper we will only consider CSP
instances where all relations have scopes of size at most two.  

We can draw a CSP instance as a constraint network (also known as potato
diagram): For each variable $x$ we have the potato $B_x\subset A$ equal to the
intersection of all the unary constraints on $x$. For each constraint
$(\{x,y\},R)$ of arity two we draw lines from elements of $B_x$ to elements of
$B_y$ that correspond to the relation $R$.

To solve the instance now means to choose in each potato $B_x$ a vertex $b_x$
so that whenever $C=(\{x,y\},R)$ is a constraint, there is a line in $R$ from
$b_x$ to $b_y$ (i.e. $(b_x,b_y)\in R$). See Figure~\ref{figPotato} for an example.

If we mark some variables in the CSP instance $I$ as free variables and print
out values of these variables in all solutions of $I$, we obtain a relation on
$A$. It turns out that there is a straightforward correspondence between CSP
instances with free variables and primitive positive definitions. See
Figure~\ref{figExample} for an example of such a correspondence.

\begin{figure}
  \begin{center}
    \begin{tikzpicture}[bend angle=45,draw,auto]
      \node[puntik] (a){};
      \node (fakea)[below of=a]{};
      \node[puntik,draw=black,very thick] (b)[below of=fakea]{};
      \node(Bx)[below of=b, node distance=1.5cm]{$B_x$};
      \node[puntik,draw=black,very thick]      (c)       [below right of=b,node distance=3cm]  {};
        \node[puntik]      (d)       [right of=c] {};
	\node[puntik]      (e)      [right of=d] {};
      \node(By)[below right of=e, node distance=1.5cm]{$B_y$};
	\node[puntik,draw=black,very thick] (f) [above right of=e, node distance=3cm]{};
	\node[puntik] (g)[above of=f] {};
	\node[puntik] (h)[above of=g]{};
      \node(Bz)[below right of=f, node distance=1.5cm]{$B_z$};

	\draw (d.center) ellipse (2cm and 1 cm);
	\draw[rotate=-10] (fakea) ellipse (1cm and 2 cm);
	\draw[rotate=10] (g) ellipse (1cm and 2 cm);
	\path
	(a) edge (h)
	edge (g)
	(b) edge[bigbad] (f)
	(c) edge[bigbad] (b)
	    edge[bigbad] (f)
	(d) edge (h)
 	    edge (f)
	(e) edge (f);

  \end{tikzpicture}
  \end{center}
  \caption{An example of a potato diagram with three variables $x,y,z$
    and three binary relations (instance solution in bold)}
  \label{figPotato}
\end{figure}
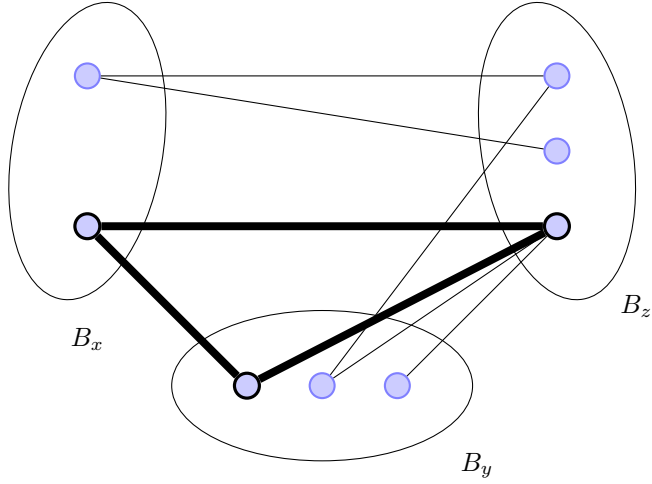

\begin{figure}
  \begin{center}
  \begin{tikzpicture}[auto,draw, node distance=7mm]
    \node[label=below left:$B_x$] (x0) {};
    \node[right of=x0,label=below:$B_s \equals R_1$, node distance=3cm] (s0) {};
    \node[right of=s0,label=below right:$B_y$, node distance=3cm] (y0) {};
    \foreach \i/\j in {0/1,1/2,2/3,3/4,4/5}
    {
    \node[above of=x\i,puntik] (x\j){};
    \node[above of=y\i,puntik] (y\j){};
  }
   \foreach \i/\j in {0/1,1/2,2/3}
    \node[above of=s\i,puntik] (s\j){};
    \draw [thick,rotate=-5] (x3) ellipse (1cm and 2.2cm);
    \draw [thick,rotate=5] (y3) ellipse (1cm and 2.2cm);
   \draw (s2) ellipse (1cm and 1.5cm);
   \node[left of=s0,node distance=1.5cm]{$R_2$};
   \node[right of=s0,node distance=1.5cm]{$R_2$};
   \foreach \u/\v in {5/1,3/1,3/2,5/2,1/1,4/3}
   {
     \path 
     (x\u) edge (s\v)
     (y\u) edge (s\v);
   }
  \end{tikzpicture}
  \end{center}
  \caption{Constraint network with the free variables $x,y$ which defines
   the relation $S=\{(x,y):\exists s,(s)\in R_1\AND (x,s)\in R_2\AND (y,s)\in R_2\}$.}
  \label{figExample}
\end{figure}
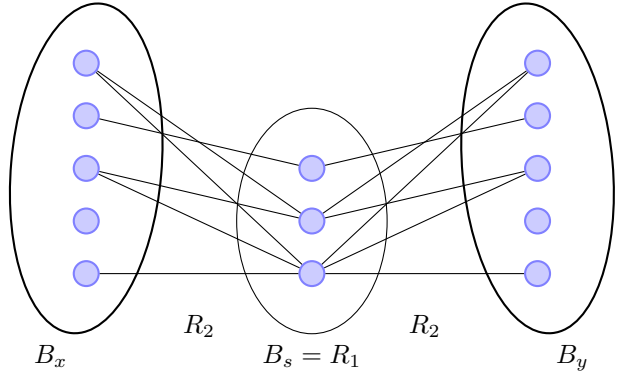

Let $\algA$ be an algebra. We say that the variety generated by
$\algA$ is \emph{congruence meet semidistributive} (\SDmeet{} for short) if for any
algebra $\algB$ in the variety generated by $\algA$ and any congruences
$\alpha,\beta,\gamma$ in $\algB$ we have
\[
\alpha\AND\beta=\alpha\AND\gamma \Rightarrow
\alpha\AND(\beta\vee\gamma)=\alpha\AND\beta.
\]

Given $\A$, if the (2,3)-consistency checking algorithm as defined
in~\cite{barto-kozik-bw} always returns a correct answer to any instance of
$\CSP(\A)$ (i.e. if there are no false positives), we say that $\A$ has
\emph{bounded width}. The following result shows a deep connection between
bounded width and congruence meet semidistributivity.

\begin{theorem}\label{thm-bw}
Let $\A$ be a finite relational structure containing all the one-element unary
relations (constants). Then the following are equivalent:
\begin{enumerate}
\item $\A$ has bounded width,
\item the variety generated by $(A,\Pol(\A))$ is congruence meet
semidistributive,
\item $\Pol(\A)$ contains ternary and quaternary weak near unanimity (WNU) operations
with the same polymer, i.e. there exist idempotent $u,v\in \Pol(\A)$ such that for all
$x,y\in A$ we have:
\[
u(x,x,y)=u(x,y,x)=u(y,x,x)=v(y,x,x,x)=\dots=v(x,x,x,y)
\]
\end{enumerate}
\end{theorem}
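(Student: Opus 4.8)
The plan is not to prove Theorem~\ref{thm-bw} from first principles --- each implication in the cycle $(1)\Rightarrow(2)\Rightarrow(3)\Rightarrow(1)$ is already available in the literature --- but to assemble it while checking that the hypotheses transfer correctly across the $\Pol$/$\Inv$ correspondence recalled above. The assumption that $\A$ contains every one-element unary relation is used only to guarantee that the algebra $(A,\Pol(\A))$ is idempotent; this is what lets the idempotent forms of the cited theorems apply, and it removes the usual technicalities about passing to an idempotent reduct.

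For $(2)\Leftrightarrow(3)$ I would quote the existence theorems for weak near unanimity operations. By Mar\'oti and McKenzie, a finite idempotent algebra lies in a congruence meet semidistributive variety if and only if its clone contains $n$-ary WNU operations for all $n\geq 3$; by the refinement of Kozik, Krokhin, Valeriote and Willard it already suffices to have a single ternary WNU $u$ and a single quaternary WNU $v$ that agree in the sense that $u(x,x,y)=v(x,x,x,y)$. Since $u$ and $v$ are WNUs, this last condition is exactly the chain of identities displayed in item~(3), so $(2)\Leftrightarrow(3)$ follows once idempotency is in hand.

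For $(1)\Leftrightarrow(2)$: the direction $(1)\Rightarrow(2)$ is the theorem of Larose and Z\'adori that bounded width forces the variety generated by $(A,\Pol(\A))$ to omit the unary and semilattice tame-congruence types, i.e.~to be \SDmeet{}. The idea is that if the variety were not \SDmeet{}, then some abelian (affine) subquotient would appear, and one could primitively positively define from $\A$ a relation encoding systems of linear equations --- the constants are exactly what allow the offending quotient to be pulled back into the relational clone of $\A$ --- and such a CSP provably lacks bounded width. The converse $(2)\Rightarrow(1)$ is precisely the main result of \cite{barto-kozik-bw}: if $(A,\Pol(\A))$ generates an \SDmeet{} variety, then the $(2,3)$-consistency algorithm never returns a false positive on an instance of $\CSP(\A)$.

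The hard part is entirely concentrated in $(2)\Rightarrow(1)$, the Barto--Kozik bounded width theorem, whose proof rests on a substantial structure theory of finite \SDmeet{} algebras --- absorbing subuniverses, linkedness of subdirect relations, and a careful analysis of how the consistency closure propagates constraints. I would not attempt to reprove it; it is invoked as a black box. Everything else is bookkeeping: unwinding the definition of bounded width from \cite{barto-kozik-bw}, translating ``primitive positive definability'' into CSP instances with free variables in the spirit of Figure~\ref{figExample}, and the idempotency observation above. The fact that Taylor-free structures give $\compNP$-complete CSPs is not needed for this equivalence, but it is why bounded width is hopeless outside the \SDmeet{} case.
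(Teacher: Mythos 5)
Your proof is correct, but it assembles the equivalence along a different path than the paper does. The paper runs the cycle $1\Rightarrow 3\Rightarrow 2\Rightarrow 1$: it cites a survey for ``bounded width implies the two WNUs with a common polymer,'' then disposes of $3\Rightarrow 2$ by an almost self-contained argument (the displayed identities fail in every nontrivial variety of modules, so by Hobby--McKenzie Theorem~9.10 the variety omits the abelian types and is \SDmeet{}; the paper even notes via Kearnes--Kiss that this step survives without finiteness). You instead prove $1\Rightarrow 2$ directly through Larose--Z\'adori (bounded width forces omission of types $\mathbf{1}$ and $\mathbf{2}$) and get $2\Leftrightarrow 3$ from the Mar\'oti--McKenzie/Kozik--Krokhin--Valeriote--Willard existence theorems for WNUs. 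The trade-off: your route leans on the harder \emph{existence} direction $2\Rightarrow 3$ (a genuine theorem about finite idempotent \SDmeet{} algebras, and the only place your argument essentially needs finiteness beyond Barto--Kozik), whereas the paper never needs to manufacture WNUs from \SDmeet{} at all --- it only needs the cheap direction that the identities obstruct modules. On the other hand, your $1\Rightarrow 2$ rests on a published theorem rather than an ``upcoming survey,'' which some readers would count in your favor. Both versions correctly use the constants hypothesis only to secure idempotency (and, for the $1\Rightarrow 2$ step, to ensure the structure is a rigid core so the cited results apply), and both treat Barto--Kozik for $2\Rightarrow 1$ as a black box, which is the only reasonable choice. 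No gaps.
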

\begin{proof}
For ``$1\Rightarrow 3$'', see the upcoming survey~\cite{manuscript}, while
``$2 \Rightarrow 1$'' is the main result of~\cite{barto-kozik-bw}.

To prove ``$3\Rightarrow 2$'', it is enough to observe that the equations for
idempotent ternary and quaternary WNU operations with the same polymer fail in any nontrivial
variety of modules. Therefore, as shown in~\cite[Theorem 9.10]{chapter9}, the third
condition implies congruence meet semidistributivity (this is true even 
in the case of infinite algebras, as shown in~\cite{kearnes-kiss-shape}). 
\end{proof}

We say that an $n$-ary operation $t$ is \emph{Taylor} if for every $1\leq k\leq n$ the
opearation $t$ satisfies some equation of the form
\[
  t(u_1,\dots,u_{k-1},x,u_{k+1},\dots,u_n)\approx t(v_1,\dots,v_{k-1},y,v_{k+1},\dots,v_n),
\]
where the
(different) variables $x$ and $y$ are both on the $k$-th place (this is the
weakest set of equations that no projection can satisfy). An algebra
$\algA$ (resp. a relational structure $\A$) admits a Taylor operation if there is a
Taylor operation in the operational clone of $\algA$ (resp. in $\Pol(\A)$).
If $\A$ is a relational structure with
all the constants (one-element unary relations) that does not
admit any Taylor operation, then $\CSP(\A)$ is known to be $\compNP$-complete
(see~\cite[Corollary 7.3]{BJK} together with~\cite[Lemma 9.4]{chapter9}).

A relational structure $\A$ is \emph{conservative} if $\A$ contains all the
possible unary relations on $A$. We will call a relational
structure $\A$ \emph{3-conservative} if $\A$ contains all the one, two and
three-element unary relations.

\section{Red, yellow, and blue pairs}
Assume that $\A$ is a 3-conservative relational structure that admits 
a Taylor operation. Then for every pair of vertices $a,b\in A$ there must exist a polymorphism
of $\A$ that, when restricted to $\{a,b\}$, is a semilattice, majority,
or minority. If there was a pair without such a polymorphism, then a result by Schaefer
\cite{schaefer} implies that all the operations in $\Pol(\A)$ restricted to $\{a,b\}$
are projections, and so $\Pol(\A)$ can not contain a Taylor term. 

We will color each pair $\{a,b\}\subset A$ as follows:
\begin{enumerate}
\item If there exists $f\in\Pol(\A)$ semilattice on $\{a,b\}$, we color
$\{a,b\}$ \emph{red}, else
\item if there exists $g\in\Pol(\A)$ majority on $\{a,b\}$, we color $\{a,b\}$
\emph{yellow}, else
\item if there exists $h\in\Pol(\A)$ minority operation on $\{a,b\}$, we color $\{a,b\}$
\emph{blue}.
\end{enumerate}

In \cite{bulatov-conservative}, Andrei Bulatov proves the Three Operations
Proposition (we change the notation to be compatible with ours and omit the
last part of the proposition which we will not need):

\begin{theorem}\label{thmBulatov}
  Let $\A$ be a 3-conservative relational structure. There are 
  polymorphisms $f(x,y)$, $g(x,y,z)$, and $h(x,y,z)$ of $\A$ such that for
every two-element subset $B\subset A$:
\begin{itemize}
\item $f_{|B}$ is a semilattice operation whenever $B$ is red, and
$f_{|B}(x,y)=x$ otherwise,
\item $g_{|B}$ is a majority operation if $B$ is yellow; $g_{|B}(x,y,z)=x$ if
$B$ is blue, and $g_{|B}(x,y,z)=f_{|B}(f_{|B}(x,y),z)$ if $B$ is red
\item $h_{|B}$ is a minority operation if $B$ is blue; $g_{|B}(x,y,z)=x$ if
$B$ is yellow, and $g_{|B}(x,y,z)=f_{|B}(f_{|B}(x,y),z)$ if $B$ is red.
\end{itemize}
\end{theorem}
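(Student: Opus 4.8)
The plan is to produce the operations one at a time, in the order $f$, $g$, $h$, each by \emph{amalgamating} into a single polymorphism the many ``local'' witnesses that the coloring (together with Schaefer's classification on the non-red pairs) makes available. The reason such amalgamations can be carried out is $3$-conservativity: every subset of $A$ of size at most three is a subuniverse, hence is itself a small relational structure to which the description of clones on two- and three-element sets applies, and the obstructions to amalgamation will be shown to vanish by examining these small subalgebras. Note that the statement asks only for \emph{some} semilattice on each red pair (no compatible orientation is demanded), which keeps the construction of $f$ comparatively light.

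For $f$: first note that any binary polymorphism $p$ may be replaced by $p'(x,y)=p\bigl(p(x,y),p(y,x)\bigr)$, which is again idempotent, is still a semilattice on every pair where $p$ was (a symmetric idempotent operation on a two-element set is automatically a semilattice), and is the first projection on every non-red pair (on such a pair $p$ restricts to a projection, and squaring a projection in this way yields the first projection). Now if there are no red pairs then $f(x,y)=x$ works; otherwise start from a semilattice polymorphism, enumerate the red pairs $B_1,B_2,\dots$, choose for each a witness $p_i$ already normalized as above (so $p_i$ is a semilattice on $B_i$ and the first projection off the red pairs), and pass successively from the current $p$ to $p\bigl(p_i(x,y),p_i(y,x)\bigr)$. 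Using idempotence and the two-element remark above, this keeps $p$ a semilattice on $B_1,\dots,B_i$ and the first projection on the non-red pairs, so after finitely many steps it stabilizes to the desired $f$.

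For $g$: put $g_0(x,y,z)=f(f(x,y),z)$. Since $f$ restricts to an associative, commutative, idempotent operation on each red pair, $g_0$ restricts there to the symmetric ternary semilattice operation --- which is exactly $f_{|B}(f_{|B}(x,y),z)$ --- while on every non-red pair, in particular on every blue pair, $g_0$ is the first projection; so only the yellow pairs need upgrading to majorities. One upgrades by amalgamating the local majority witnesses, processing the yellow pairs one at a time and replacing the current operation $q$ by $w\bigl(q(x,y,z),q(y,z,x),q(z,x,y)\bigr)$ for a suitable majority witness $w$; because $q$ is symmetric on the red pairs and $w$ is idempotent, this automatically preserves the prescribed red behaviour. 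The operation $h$ is built the same way, starting again from $f(f(x,y),z)$, with ``majority'' replaced by ``minority'' and ``yellow'' and ``blue'' exchanged. The main obstacle --- and what makes this more than bookkeeping --- is to guarantee that these upgrades do not damage the behaviour already secured on the other colors: one must show, via a finite case analysis on the two- and three-element subalgebras, that the majority witnesses can be chosen to act as first projections on the blue pairs (and, dually, the minority witnesses as first projections on the yellow pairs) while still respecting the red pairs. Establishing those compatibility facts is precisely where $3$-conservativity is used in an essential way, and it is the technical core of Bulatov's proof in \cite{bulatov-conservative}.
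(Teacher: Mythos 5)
The paper does not actually prove this theorem: it is quoted from Bulatov's article \cite{bulatov-conservative}, and the text explicitly says ``we omit the proof of the theorem here,'' remarking only that $f$, $g$, $h$ ``can be obtained \dots by patiently composing terms.'' So there is no in-paper argument to measure you against; your sketch is consistent with that remark and is, if anything, more informative than what the paper provides. Your construction of $f$ is in fact complete and correct: the normalization $p\mapsto p(p(x,y),p(y,x))$ does fix a semilattice where one exists and collapses either projection to $\pi_1$ on non-red pairs, and the successive amalgamation $p(p_i(x,y),p_i(y,x))$ preserves earlier semilattices because $p$ is idempotent and commutative on already-processed red pairs (here it matters that the theorem only asks for \emph{some} semilattice on each red pair, as you note).

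However, as a standalone proof the proposal has a genuine gap, and you name it yourself: the ``compatibility facts'' for $g$ and $h$ are asserted, not established, and they are the entire content of the theorem beyond bookkeeping. Concretely, the step $q\mapsto w(q(x,y,z),q(y,z,x),q(z,x,y))$ preserves the red behaviour (by symmetry of $q$ there) and preserves majorities already installed on earlier yellow pairs (since the majority on a two-element set is totally symmetric and $w$ is idempotent), but it can destroy the required $\pi_1$ behaviour on blue pairs: a raw majority witness $w$ for one yellow pair may restrict to the minority, or to $\pi_2$ or $\pi_3$, on a blue pair, and may even restrict to the minority on a \emph{different} yellow pair $B'$ --- in which case the later upgrade of $B'$ fails outright, because $w'(m,m,m)=m$ by idempotence and no subsequent amalgamation of this shape can turn a minority back into a majority. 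So one must first prove a normalization lemma (for each yellow pair there is a witness that is a majority there, the first projection on every blue and uncolored pair, and never a minority on another yellow pair), and dually for the minority witnesses; this requires an analysis of the two- and three-element subalgebras via Post's lattice and is exactly the technical core of \cite{bulatov-conservative} that you defer. Deferring it is defensible --- the paper defers the whole proof --- but you should be aware that what remains is not a routine verification.
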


We omit the proof of the theorem here, but we note that it turns out that the
operations $f,\,g$, and $h$ can be obtained in a straightforward way by
patiently composing terms (also, one actually does not need the full power of
3-conservativity here; 2-conservativity would suffice). 

\begin{corollary}
If $\A$ is such that all its pairs are red or yellow, then $\A$ has bounded width
since the operations
\begin{align*}
u(x,y,z)&= g(f(f(x,y),z),f(f(y,z),x),f(f(z,x),y))\\
v(x,y,z,t)&=g(f(f(f(x,y),z),t),f(f(f(y,z),x),t)),f(f(f(z,x),y),t)) 
\end{align*}
are a pair of ternary and quaternary WNUs with the same polymer. If $x,y$ are red then
$u(x,x,y)=v(x,x,x,y)=f(x,y)$, and if $x,y$ are yellow, then 
$u(x,x,y)=v(x,x,x,y)=x$.
\end{corollary}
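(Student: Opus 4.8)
The plan is to verify that $u$ and $v$ are idempotent polymorphisms of $\A$ satisfying the weak near unanimity identities with a common polymer, and then to invoke the implication $(3)\Rightarrow(1)$ of Theorem~\ref{thm-bw}; this is legitimate because a 3-conservative structure contains in particular all one-element unary relations, hence all the constants required by that theorem. Membership of $u$ and $v$ in $\Pol(\A)$ is immediate, since $\Pol(\A)$ is a clone and $u,v$ are obtained by composing the operations $f$ and $g$ from Theorem~\ref{thmBulatov}. Idempotence is equally quick, using that $f$ and $g$ are idempotent: for $f$ this is built into Theorem~\ref{thmBulatov}, and for $g$ one has $g(a,a,a)=f(f(a,a),a)=a$ on a red pair, $g(a,a,a)=a$ by the majority law on a yellow pair, and $g(a,a,a)=a$ on a blue pair; hence $u(x,x,x)=g(x,x,x)=x$ and $v(x,x,x,x)=g(x,x,x)=x$.

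The substantive step is the verification, for all $x,y\in A$, of
\[
u(x,x,y)=u(x,y,x)=u(y,x,x)=v(y,x,x,x)=v(x,y,x,x)=v(x,x,y,x)=v(x,x,x,y).
\]
For $x=y$ this holds by idempotence, so one may assume $x\ne y$; by hypothesis $\{x,y\}$ is red or yellow, and on this two-element set the behaviour of $f$ and $g$ is completely prescribed by Theorem~\ref{thmBulatov}. I would treat the two colours separately. When $\{x,y\}$ is yellow, $f_{|\{x,y\}}$ is the first projection, so every inner term $f(f(\cdots),\cdot)$ (resp.\ $f(f(f(\cdots),\cdot),\cdot)$) appearing in $u$ (resp.\ $v$) collapses to its leftmost variable; thus $u$ restricts on $\{x,y\}$ to $g(x,y,z)$ and $v$ restricts to $g(x,y,z)$ (ignoring the fourth coordinate), and since $g_{|\{x,y\}}$ is a majority operation, every one of the seven expressions above equals $x$. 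When $\{x,y\}$ is red, $f_{|\{x,y\}}$ is a semilattice operation and $g_{|\{x,y\}}(a,b,c)=f(f(a,b),c)$; by commutativity, associativity and idempotence of the semilattice, each inner term in $u$ equals the symmetric product $f(f(x,y),z)$ of its three variables and each inner term in $v$ equals the symmetric product $f(f(f(x,y),z),t)$ of its four variables, so $g$ is applied to three equal arguments and, being idempotent, returns that common value. Hence $u$ and $v$ restrict on $\{x,y\}$ to iterated semilattice operations, which are fully symmetric, and every one of the seven expressions equals $f(x,y)$. In particular the common polymer is $x$ on yellow pairs and $f(x,y)$ on red pairs, as claimed.

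The only real difficulty is the bookkeeping in this last paragraph: one must keep careful track of which argument slot each variable occupies in the definitions of $u$ and $v$ when forming the seven permuted evaluations. Once these identities are established, $u$ and $v$ witness condition~(3) of Theorem~\ref{thm-bw}, and therefore $\A$ has bounded width.
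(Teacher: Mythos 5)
Your proposal is correct and follows essentially the same route as the paper: the paper's corollary carries its own (very terse) justification, namely the evaluation of $u(x,x,y)$ and $v(x,x,x,y)$ on red and yellow pairs via the prescribed restrictions of $f$ and $g$ from Theorem~\ref{thmBulatov}, followed by an appeal to Theorem~\ref{thm-bw}. You have merely written out the bookkeeping (projection collapse on yellow pairs, semilattice symmetry on red pairs, idempotence of $g$) that the paper leaves implicit, and all of it checks out.
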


By Theorem~\ref{thm-bw}, it is enough to show that if $\CSP(\A)$ is not
$\compNP$-complete, then $\A$ does not have any blue pair of vertices.

We could end our paper at this point and refer the reader to the article
\cite{hell-rafiey} which, among other things, shows by combinatorial methods
that if $G$ is a conservative digraph and $\CSP(G)$ is not $\compNP$-complete,
then all pairs of vertices of $G$ are either yellow or red. However, we would
like to present a short algebraic proof of this statement and generalize it
beyond digraphs.

\section{Main proof}
We proceed by contradiction. Let us for the remainder of this section fix a
3-conservative relational structure $\A$ (with
unary and binary relations only) that admits a Taylor term,
yet there exists a blue pair $\{a,b\}\subset A$.

\begin{proposition}\label{propRS}
The relational clone of $\A$ contains the 
relation 
\[
  R=\{(b,b,b),(a,a,b),(a,b,a),(b,a,a)\}.
\]
\end{proposition}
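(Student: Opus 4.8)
The plan is to show that every polymorphism of $\A$ preserves $R$; by the Galois correspondence between $\Pol$ and $\Inv$ recalled in the preliminaries, this is exactly the assertion that $R\in\Inv(\Pol(\A))$, the relational clone of $\A$. The key observation is that $R$ is nothing but the \emph{affine} ternary relation on the two-element set $\{a,b\}$: if we identify $b$ with $0$ and $a$ with $1$ inside the two-element field $\mathbb F_2$, then $R=\{(x,y,z)\in\mathbb F_2^3:x+y+z=0\}$.

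First I would pass to the two-element subalgebra on $\{a,b\}$. Since $\A$ is conservative it contains the unary relations $\{a,b\}$, $\{a\}$ and $\{b\}$, so every $f\in\Pol(\A)$ maps $\{a,b\}^n$ into $\{a,b\}$ and is idempotent; hence $f_{|\{a,b\}}$ is a well-defined idempotent operation on $\{a,b\}$, and as $R\subseteq\{a,b\}^3$, whether $f$ preserves $R$ depends only on $f_{|\{a,b\}}$. Thus the whole question takes place inside the clone $C=\{f_{|\{a,b\}}:f\in\Pol(\A)\}$ of idempotent operations on the two-element set $\{a,b\}$.

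Next I would identify $C$ via Post's classification of clones on a two-element set. Because $\{a,b\}$ is blue, $C$ contains a minority operation; and since $\{a,b\}$ is neither red nor yellow, the definition of the colouring gives that $C$ contains no semilattice operation and no majority operation. Going through the (finite) lattice of idempotent clones on a two-element set, the only one containing a minority operation but neither a majority nor a semilattice operation is the clone of idempotent affine operations of $\mathbb F_2\cong\{a,b\}$, whose members are exactly the maps $(w_1,\dots,w_n)\mapsto\sum_i c_iw_i$ with $c_i\in\mathbb F_2$ and $\sum_i c_i=1$. Hence every polymorphism of $\A$ restricts on $\{a,b\}$ to such an affine operation.

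The argument then finishes with a one-line computation: if $f_{|\{a,b\}}$ has the form $(w_1,\dots,w_n)\mapsto\sum_i c_iw_i$ and the tuples $(x_i,y_i,z_i)$ lie in $R$ for $i=1,\dots,n$, so that $x_i+y_i+z_i=0$ for each $i$, then $\sum_i c_ix_i+\sum_i c_iy_i+\sum_i c_iz_i=\sum_i c_i(x_i+y_i+z_i)=0$, i.e.\ applying $f$ coordinatewise to these tuples again lands in $R$. Therefore $R$ is preserved by all of $\Pol(\A)$ and so lies in the relational clone of $\A$. The one step that needs genuine care (or an explicit citation) is the appeal to Post's lattice: one must be sure that an idempotent clone on a two-element set which contains a minority operation but no majority and no semilattice operation is forced to be the affine clone — everything else is the Galois correspondence together with a triviality.
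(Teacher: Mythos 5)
Your proof is correct, but it takes a genuinely different route from the paper's. The paper defines $R$ from the inside out, as the subuniverse of $\{a,b\}^3$ generated by $(a,a,b)$, $(a,b,a)$, $(b,a,a)$ --- that is, $R=\{(t(a,a,b),t(a,b,a),t(b,a,a)):t\in\Pol_3(\A)\}$, which is automatically an invariant relation --- and then rules out each of the four possible extra tuples by explicit term manipulation: for instance, a witness $t$ for $(b,b,a)\in R$ is combined with the minority $h$ (using that $\{a,b\}$ is not red to pin down the remaining values of $t$) to produce a majority operation on $\{a,b\}$, contradicting blueness. You instead pin down the entire restricted clone $\Pol(\A)|_{\{a,b\}}$ in one stroke: blueness gives a minority but no majority and no semilattice operation on $\{a,b\}$, Post's classification then forces this idempotent clone to be the idempotent affine clone of $\mathbb{F}_2$, and preservation of the affine relation $R$ becomes a one-line computation. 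The step you flag is indeed the one that needs a citation, but it is true: the only idempotent clones on a two-element set containing the minority operation are the idempotent affine clone, the self-dual idempotent clone, and the clone of all idempotent operations, and the latter two both contain a majority operation --- so in fact you only need ``no majority'' for this step, not ``no semilattice.'' The paper's explicit compositions are essentially an inlined, ternary-only fragment of exactly this classification. Your version buys a cleaner conceptual picture (on a blue pair the algebra is term equivalent to the idempotent reduct of affine $\mathbb{F}_2$) at the cost of an external appeal to Post's lattice; the paper's version is more self-contained and stays within the Schaefer-type facts it has already invoked.
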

\begin{proof}
Consider the ternary relation 
\[
R=\{(t(a,a,b),t(a,b,a),t(b,a,a)): t\in \Pol_3(\A)\},
\]
where $\Pol_3(\A)$ denotes all the ternary polymorphisms of $\A$. It is easy to
see that the relation $R$ lies in $\Inv(\Pol(\A))$. Since the projections
$\pi_1,\pi_2,\pi_3$, and the minority $h$ belong to $\Pol_3(\A)$, substituting
these polymorphisms for $t$ yields that
$(a,a,b),(a,b,a),(b,a,a)$, and $(b,b,b)$ lie in $R$. 

Since $\{a,b\}\leq \algA$, we have $R\subset\{a,b\}^3$. Assume now that $R$ contains 
more than the four elements given above. If $(a,a,a)\in R$, then there exists $t\in \Pol_3(\A)$ that acts as a
majority on $\{a,b\}$, and $a,b$ should have been yellow. If $(b,b,a)\in R$, then
there exists some $t$ such that 
\begin{align*}
t(a,a,b)&=t(a,b,a)=b\\
t(b,a,a)&=a.
\end{align*}
Since $\{a,b\}$ is not red, $t(b,b,a)=t(b,a,b)=a$ and $t(a,b,b)=b$ (otherwise 
one of $t(x,x,y)$, $t(x,y,x)$, or $t(y,x,x)$ would be a semilattice operation). But then
$f(x,y,z)=h(t(x,y,z),t(y,z,x),t(z,x,y))$ is a majority operation on $\{a,b\}$, a contradiction.

We can handle the cases $(b,a,b),(a,b,b)\in R$ in a similar fashion.
\end{proof}


If $(x,y,z)$ is a triple of free variables of the CSP instance
$I$ and $s$ is a solution of $I$, then we say that $s$ \emph{realizes} the triple 
$(c,d,e)\in A^3$ if $s(x)=c$, $s(y)=d$, and $s(z)=e$. We say that $I$ realizes
some triple of elements if there exists a solution $s$ of $I$ that realizes
this triple.

Since $R$ lies in the relational clone of $\A$, it follows that $R$ lies in the
relational clone of the structure $\overline{\A}$ obtained from $\A$ by adding all the
unary and binary relations in the relational clone of $\A$ as basic relations.
(We will need to use these relations to make our induction work.) Hence, there 
is a $\CSP(\overline{\A})$ instance $I$
and three variables $x$, $y$, $z$ such that $I$ with free variables $x$, $y$, $z$ realizes precisely all the triples in $R$.  Let $\{B_j: j\in J\}$ be
the potatoes in the constraint network of $I$. Choose $I$ so that the sum of
the sizes of its potatoes is minimal among all possible $\CSP(\overline{\A})$ instances
realizing $R$.

Observe that if $s_1,s_2,s_3$ are solutions of $I$ and $p$ is a ternary operation
preserving all the relations used in $I$, then $p(s_1,s_2,s_3)$ is also a solution of $I$.

\begin{observation}\label{obsFour}
For every $j\in J$, we have $|B_j|$ equal to 2 or 3.
\end{observation}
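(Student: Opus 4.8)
The plan is to separate the lower and upper bounds, using in both directions the minimality of $I$ together with the fact that, since $\A$ is $3$-conservative, \emph{every} subset of $A$ of size at most three is a basic (unary) relation of $\overline{\A}$, so it may be freely used to cut down a potato.

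For the lower bound, first $B_j\neq\emptyset$ (else $I$ has no solution, whereas $R\neq\emptyset$). If $j\in\{x,y,z\}$, then every solution of $I$ evaluates $j$ inside $\{a,b\}$ because $R\subseteq\{a,b\}^3$, so intersecting $B_j$ with the basic relation $\{a,b\}$ removes no solution; minimality then forces $B_j\subseteq\{a,b\}$, and since $R$ uses both $a$ and $b$ in each coordinate we get $B_j=\{a,b\}$. If $j\notin\{x,y,z\}$ and $B_j=\{c\}$, I would delete the variable $j$, replacing each binary constraint $(\{j,k\},S)$ on $j$ by the unary constraint $\{d:(c,d)\in S\}$ on $k$; this unary relation is primitively positively definable from $S$ and the constant $\{c\}$, hence is again a basic relation of $\overline{\A}$. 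The resulting instance still realizes exactly $R$ but has strictly smaller total potato size, contradicting minimality. Hence $|B_j|\geq 2$.

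For the upper bound, suppose $|B_j|\geq 4$. For $\tau\in R$ let $P_\tau\subseteq B_j$ be the set of values taken at $j$ by the solutions of $I$ realizing $\tau$; each $P_\tau$ is nonempty since $I$ realizes $\tau$. For any $3$-element $U\subseteq A$, intersecting $B_j$ with $U$ gives a legitimate instance of strictly smaller total potato size (as $|U|<|B_j|$), which can only lose solutions, so by minimality it fails to realize some $\tau\in R$; this says exactly that $U\cap P_\tau=\emptyset$. Thus the four nonempty sets $\{P_\tau:\tau\in R\}$ admit no $3$-element transversal, so in particular $|\bigcup_\tau P_\tau|\geq 4$. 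Now apply Hall's theorem to three of them, $P_{\tau_1},P_{\tau_2},P_{\tau_3}$: if they have no system of distinct representatives, then (by Hall) two of them equal a common singleton or all three lie in a common $2$-element set, and in either case one assembles out of $\bigcup_\tau P_\tau$ a $3$-element set meeting all four $P_\tau$ --- a transversal, contradiction. Otherwise there are distinct $c_i\in P_{\tau_i}$ ($i=1,2,3$); then $U=\{c_1,c_2,c_3\}$ is a transversal of $P_{\tau_1},P_{\tau_2},P_{\tau_3}$, so by the above it must miss $P_{\tau_4}$.

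Now the blue pair and $3$-conservativity collide. Pick solutions $s_i$ realizing $\tau_i$ with $s_i(j)=c_i$. Since $\{a,b\}$ is blue, $h$ restricts on $\{a,b\}$ to the minority, i.e.\ to $\mathbb{Z}_2$-addition; identifying $a,b$ with $0,1$ the relation $R$ is the affine hyperplane $\{v\in\mathbb{Z}_2^3:v_1+v_2+v_3=1\}$, whose four points sum coordinatewise to $0$, so the sum of any three distinct points of $R$ is the fourth. Hence $h(s_1,s_2,s_3)$ is a solution of $I$ realizing $h(\tau_1,\tau_2,\tau_3)=\tau_4$, so $h(c_1,c_2,c_3)$, the value of this solution at $j$, lies in $P_{\tau_4}$. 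But $\{c_1,c_2,c_3\}$ is a subalgebra of $(A,\Pol(\A))$ (a $3$-element set, $\A$ being $3$-conservative), so $h(c_1,c_2,c_3)\in\{c_1,c_2,c_3\}=U$, contradicting $U\cap P_{\tau_4}=\emptyset$. Therefore $|B_j|\leq 3$. The algebraic punchline is short; the part that needs care is the bookkeeping in the ``no transversal'' case, where one must verify that the only ways three nonempty subsets can fail to have a system of distinct representatives still leave room for a $3$-element transversal of all four $P_\tau$.
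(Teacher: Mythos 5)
Your proof is correct and rests on exactly the two ingredients the paper uses: the minority polymorphism $h$ turns any three solutions realizing three distinct tuples of $R$ into a solution realizing the fourth, and by $3$-conservativity a set of at most three elements is both a legal unary constraint and a subalgebra (so the value at $j$ stays put). The paper reaches the conclusion more directly --- it restricts $B_j$ to $\{s_{aab}(j),s_{aba}(j),s_{baa}(j)\}$ and observes that $h(s_{aab},s_{aba},s_{baa})$ still realizes $(b,b,b)$ --- which makes your transversal and Hall's-theorem bookkeeping (needed only to arrange that the three values at $j$ are distinct, a property the argument never actually requires) superfluous.
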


\begin{proof}
It is easy to see that the potatoes for $x,y,$ and $z$ are all equal to
$\{a,b\}$.

Assume that there is a potato $B_j$ with at least four distinct elements. Let
$s_{aab},s_{aba},$ and $s_{baa}$
be solutions of $I$ realizing the triples $(a,a,b)$, $(a,b,a)$, and $(b,a,a)$.

Now let $B_j'=\{s_{aab}(j),s_{aba}(j),s_{baa}(j)\}$. If we replace $B_j$ by 
$B_j'$ (using a unary constraint), we get a smaller
instance $I'$ which keeps the solutions $s_{aab},s_{aba}$ and $s_{baa}$.
We know that $\overline{\A}$ has a polymorphism $h$ which is the minority on
$\{a,b\}$. Therefore, $h(s_{aab},s_{aba},s_{baa})$ is a solution of $I'$ that
realizes the triple $(b,b,b)$. The
instance $I'$ then realizes precisely all the elements of $R$, a contradiction with the minimality of $I$.

If some $B_j$ was a singleton, we could simply remove the variable $j$ from
$I$. The potatoes that were connected with $B_j$ by binary constraints might
need to become smaller, but that is easy to achieve using unary constraints. We
would obtain in this way a smaller instance that still realizes $R$.  
\end{proof}

\begin{observation}\label{obsBlue}
The pair $\{c,d\}$ is blue for every distinct $c,d\in B_j,\,j\in J$
\end{observation}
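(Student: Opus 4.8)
My plan is to argue by contradiction, mirroring the minimality argument used in Observation~\ref{obsFour}. Suppose some potato $B_j$ contains two distinct elements $c,d$ that form a pair which is \emph{not} blue, i.e.\ it is either red or yellow. The idea is to use this fact to shrink $I$ while still realizing all of $R$, contradicting minimality. First I would set up three solutions $s_{aab},s_{aba},s_{baa}$ of $I$ realizing the triples $(a,a,b),(a,b,a),(b,a,a)$, exactly as before. Since these are solutions and every relation used in $I$ is preserved by all polymorphisms of $\overline{\A}$, applying any ternary $p\in\Pol(\overline{\A})$ coordinatewise to $s_{aab},s_{aba},s_{baa}$ again yields a solution; I would combine this with a unary constraint that restricts $B_j$ to the relevant three (or fewer) values $\{s_{aab}(j),s_{aba}(j),s_{baa}(j)\}$ to get a strictly smaller instance $I'$.

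The heart of the argument is to produce, from the assumption that $\{c,d\}$ is red or yellow, a polymorphism that lets $I'$ still realize the missing triple $(b,b,b)$ (and no triple outside $R$). If $\{c,d\}$ is red, I would use the semilattice operation $f$ from Theorem~\ref{thmBulatov}: on $\{c,d\}$ it collapses both elements toward a single absorbing element, so iterating $f$ (or composing it with the minority $h$ on $\{a,b\}$) forces the $j$-th coordinate to leave $B_j'$ and thereby makes the restricted potato strictly smaller, or else lets us identify $c$ and $d$ outright. If $\{c,d\}$ is yellow, I would instead use the majority operation $g$: the key point is that $g$ restricted to $\{c,d\}$ is a majority, so $g(s_{aab},s_{aba},s_{baa})$ lands on whichever of $c,d$ occurs at least twice among the three values $s_{aab}(j),s_{aba}(j),s_{baa}(j)$, again shrinking the potato $B_j'$ by at least one element. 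In both cases one has to check simultaneously that the new solution still has $j$-coordinate constrained consistently with the binary constraints touching $B_j$, but this is handled the same way as in Observation~\ref{obsFour}: the offending neighbouring potatoes can be cut down with unary constraints without losing $s_{aab},s_{aba},s_{baa}$.

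The main obstacle I anticipate is making sure that after all the shrinking, the modified instance $I'$ realizes \emph{exactly} the set $R$ and not some larger or smaller set of triples: we must still hit all four triples of $R$, and we must not accidentally create a solution realizing $(a,a,a)$ or one of the other ``bad'' triples ruled out in Proposition~\ref{propRS}. Keeping all of $R$ is easy once we know $h$ (the minority on $\{a,b\}$) is a polymorphism of $\overline{\A}$, since $h(s_{aab},s_{aba},s_{baa})$ realizes $(b,b,b)$ and the three original solutions survive any unary restriction that keeps their own $j$-values; not creating bad triples follows because any solution of $I'$ is in particular a solution of $I$, and $I$ realizes only $R$. So the contradiction with minimality is obtained, and every pair of distinct elements inside a single potato must be blue.
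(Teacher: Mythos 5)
Your high-level strategy (contradiction with the minimality of $I$ by deleting an element from $B_j$) is the right one, but the mechanism you propose for actually deleting an element does not work, and the idea that makes the paper's proof go through is missing. The paper's argument is: fix a solution $s$ with $s(j)=d$ (such $s$ exists, else $d$ is already removable), and show that \emph{every} solution $r$ with $r(j)=c$ can be replaced by a solution realizing the \emph{same} triple but with $j$-coordinate $d$ --- namely $f(r,s)$ in the red case (with $f(c,d)=d$ w.l.o.g.) and $g(r,s,s)$ in the yellow case. The two facts that make this work are (i) $f(c,d)=d$, resp.\ $g(c,d,d)=d$, pushes the $j$-coordinate off $c$, and (ii) since $\{a,b\}$ is \emph{blue}, Theorem~\ref{thmBulatov} guarantees that $f$ and $g$ restrict to the first projection on $\{a,b\}$, so the realized triple of $r$ is unchanged. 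Point (ii) is exactly why the absorption argument preserves all four triples of $R$ at once, and it is nowhere in your proposal.

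What you propose instead does not achieve the deletion. In the red case, ``iterating $f$\ldots forces the $j$-th coordinate to leave $B_j'$'' cannot happen: by conservativity $B_j'$ is a subuniverse, so no polymorphism can move a coordinate out of it, and $f$ is idempotent, so iterating it stabilizes after one step. In the yellow case, applying $g$ to the three specific solutions $s_{aab},s_{aba},s_{baa}$ produces a single new solution realizing $(a,a,b)$; it does not show that $c$ can be removed from $B_j$ while all four triples of $R$ remain realized, and your claim that the $j$-coordinate ``lands on whichever of $c,d$ occurs at least twice'' presupposes that two of the three values $s_{aab}(j),s_{aba}(j),s_{baa}(j)$ coincide and form the pair $\{c,d\}$, which need not hold (they may be three distinct elements of a three-element potato, and they need not include both $c$ and $d$ at all). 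The part of your write-up about not creating triples outside $R$ is fine --- any solution of the restricted instance is a solution of $I$ --- but the core step of the observation needs the quantification ``for every solution $r$ with $r(j)=c$'' together with the projection behaviour of $f$ and $g$ on the blue pair $\{a,b\}$.
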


\begin{proof}
Assume first that $\{c,d\}$ is red. Let without loss of generality $f(c,d)=d$, where $f$ is the
semilattice-like polymorphism
from Theorem~\ref{thmBulatov}. We know that there exists a
solution $s$ of $I$ such that $s(j)=d$ (otherwise, we could just delete
$d$ from $B_j$). If now $r$ is a solution such that $r(j)=c$, then $f(r,s)$ is
also a solution of $I$, and $f(r,s)(j)=d$. What is
more, since $f=\pi_1$ on $\{a,b\}$, the solution $f(r,s)$ realizes the same
triple as $r$. Therefore, we can remove $c$ from $B_j$ without losing anything
in $R$.

The situation for $\{c,d\}$ yellow is similar. Again, let $s, r$ be
solutions such that $s(j)=d$ and $r(j)=c$. Then $g(r,s,s)$ is a solution that
realizes the same triple as $r$ and satisfies $g(r,s,s)(j)=d$. We can thus 
eliminate $c$ from $B_j$.
\end{proof}

We note that one of the main ingredients in the above proof was the fact that the algebra
$\{d\}$ absorbs $\{c,d\}$ in the sense of~\cite{barto-kozik-bw}. 

\begin{observation}\label{obsTwo}
For every $j\in J$, we have $|B_j|= 2$.
\end{observation}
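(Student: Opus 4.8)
The plan is to argue by contradiction from the minimality of $I$. Suppose some potato has more than two elements; by Observation~\ref{obsFour} it then has exactly three, say $B_j=\{c,d,e\}$, and by Observation~\ref{obsBlue} all three of its pairs are blue (in particular $j\notin\{x,y,z\}$, whose potatoes all equal $\{a,b\}$). For each of the four triples $t\in R$ let $S_t$ be the set of solutions of $I$ realizing $t$, and set $P_t=\{s(j):s\in S_t\}\subseteq\{c,d,e\}$; each $P_t$ is nonempty since $I$ realizes all of $R$.

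First I would convert minimality into a shrinking test. Given a two-element subset $D\subset B_j$, adding the unary constraint ``$j\in D$'' (a relation of $\overline{\A}$, as it contains all one- and two-element unary relations) produces an instance of strictly smaller total potato size whose solutions are exactly the solutions of $I$ taking a value in $D$ at $j$; it therefore still realizes every $t\in R$ unless some $P_t$ equals $B_j\setminus D$. By minimality it cannot realize $R$, so each of the three choices of $D$ is blocked by such a $P_t$, and since the three sets $B_j\setminus D$ are pairwise distinct singletons we obtain three \emph{distinct} triples $t_1,t_2,t_3\in R$ with $P_{t_1}=\{c\}$, $P_{t_2}=\{d\}$, $P_{t_3}=\{e\}$. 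Let $t_4$ be the fourth triple of $R$ and fix some $w\in P_{t_4}$.

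Now I would invoke the minority polymorphism $h$ of Theorem~\ref{thmBulatov} together with the affine shape of $R$. Under the identification $a\leftrightarrow 0$, $b\leftrightarrow 1$ the set $R$ of Proposition~\ref{propRS} is a coset of a two-dimensional subspace of $\mathbb{Z}_2^3$, so any three distinct elements of $R$ have the fourth as their coordinatewise minority; since $h$ is the minority on $\{a,b\}$, this means $h(t_2,t_3,t_4)=t_1$, and similarly for the other rotations. As $h\in\Pol(\overline{\A})$ preserves every relation used in $I$, applying it coordinatewise to solutions $s_2\in S_{t_2}$, $s_3\in S_{t_3}$, $s_4\in S_{t_4}$ with $s_4(j)=w$ yields a solution realizing $t_1$; reading off its $j$-coordinate and using $P_{t_1}=\{c\}$ gives $h(d,e,w)=c$, and the two analogous combinations give $h(c,e,w)=d$ and $h(c,d,w)=e$. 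Finally $h$ preserves the unary relation $\{d,e,w\}$ (basic in $\A$ by 3-conservativity), so $h(d,e,w)\in\{d,e,w\}$; together with $h(d,e,w)=c$ this forces $w=c$, while $h(c,e,w)=d$ forces $w=d$ and $h(c,d,w)=e$ forces $w=e$ in the same way. Since $c,d,e$ are distinct this is impossible, so every $B_j$ has exactly two elements.

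The point I expect to require the most care is the dichotomy drawn from minimality: one has to notice that either one of the three two-element restrictions of $B_j$ already gives a smaller instance realizing $R$, or else the four nonempty fibers $P_t$ over $j$ are forced to contain the three \emph{distinct} singletons $\{c\},\{d\},\{e\}$, pinning down exactly one ``free'' triple $t_4$. After that the coset structure of $R$ and plain conservativity of $h$ do the rest with no further computation.
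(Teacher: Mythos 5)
Your proof is correct, and while the opening move (forcing, via minimality, that the solutions over $B_j=\{c,d,e\}$ spread out over all three elements) is the same as the paper's, your endgame is genuinely different. The paper fixes realizations $s_{baa},s_{aba},s_{aab},s_{bbb}$ with $s_{baa}(j)=c$, $s_{aba}(j)=d$, $s_{aab}(j)=e$, $s_{bbb}(j)=c$ (after a WLOG), forms the single new solution $h(s_{baa},s_{aba},s_{bbb})$ — which realizes $(a,a,b)$ and has $j$-value $h(c,d,c)=d$ because $\{c,d\}$ is \emph{blue} by Observation~\ref{obsBlue} — and then wins by shrinking $B_j$ to $\{c,d\}$, i.e.\ by appealing to minimality a second time. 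You instead sharpen minimality once and for all into the statement that three distinct triples of $R$ have the singleton fibers $\{c\},\{d\},\{e\}$ over $j$, exploit the same coset/minority structure of $R$ to extract the three equations $h(d,e,w)=c$, $h(c,e,w)=d$, $h(c,d,w)=e$, and then contradict the conservativity of $h$ on the (at most three-element, hence basic) unary relation $\{d,e,w\}$. A pleasant byproduct of your route is that it never actually uses Observation~\ref{obsBlue} — you cite it but need only that $h$ is conservative and a minority on the blue pair $\{a,b\}$ — whereas the paper's computation $h(c,d,c)=d$ genuinely requires $\{c,d\}$ to be blue. The price is that your argument leans explicitly on $3$-conservativity at the very end, while the paper's leans on the already-established blueness of pairs inside $B_j$; both are available, so each version is sound.
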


\begin{proof}
Assume that we have a $j$ such that $B_j=\{c,d,e\}$.
As in the proof of Observation~\ref{obsFour}, let $s_{aab},s_{aba},s_{baa}$
be some solutions of $I$ realizing $(a,a,b)$, $(a,b,a)$, $(b,a,a)$.

If $\{s_{aab}(j),s_{aba}(j),s_{baa}(j)\}\neq B_j$, then we can make $B_j$ (and
therefore $I$) smaller like in the proof of Observation~\ref{obsFour}. Without
loss of generality assume that
\begin{align*}
  s_{baa}(j)&=c,\\
  s_{aba}(j)&=d,\\
  s_{aab}(j)&=e,\\
  s_{bbb}(j)&=c.
\end{align*}

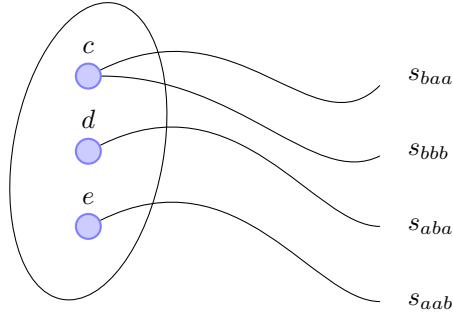
\begin{figure}
  \begin{center}
    \begin{tikzpicture}[bend angle=45,draw,auto]
      \node[puntik,label=$c$] (c){};
      \node[puntik, below of=c,label=$d$] (d){};
      \node[puntik, below of=d,label=$e$] (e){};
      \node[label=right:$s_{baa}$, right of=c,node distance=4cm](sbaa){};
      \node[label=right:$s_{bbb}$,below of=sbaa](sbbb){};
      \node[label=right:$s_{aba}$,below of=sbbb](saba){};
      \node[label=right:$s_{aab}$, below of=saba](saab){};
      \draw[rotate=-10] (d) ellipse (1cm and 2 cm);
      \draw(c)..controls ($(c)+(2cm,1cm)$) and ($(c)+(3cm,-1cm)$).. (sbaa);
      \draw(d)..controls ($(d)+(2cm,1cm)$) and ($(d)+(3cm,-1cm)$).. (saba);
      \draw(e)..controls ($(e)+(2cm,1cm)$) and ($(e)+(3cm,-1cm)$).. (saab);
      \draw(c)..controls ($(c)+(2cm,0)$) and ($(c)+(3cm,-1.5cm)$).. (sbbb);
  \end{tikzpicture}
  \end{center}
  \caption{Proving Observation~\ref{obsTwo}}
  \label{figTwo}
\end{figure}

Now consider the solution $r=h(s_{baa},s_{aba},s_{bbb})$
where $h$ again comes from Theorem~\ref{thmBulatov}.
Since all the pairs in $B_j$ are blue, $r$ is a realization of $(a,a,b)$ such
that $r(j)=h(c,d,c)=d$. This means that we can safely delete $e$ from $B_j$ and
again get a smaller instance that realizes $R$.
\end{proof}

Let us put together what we know about $I$: We have two-element potatoes
everwhere, connected by binary constraints. Now observe that every binary
relation on a two element set is invariant under the majority map $m$. Therefore,
$I$ must realize the triple
\[
m((b,a,a),(a,b,a),(a,a,b))=(a,a,a),
\]
a contradiction.

We state our result as a theorem:

\begin{theorem}\label{thmTaylorSDmeet}
If $\A$ is a finite 3-conservative relational structure that admits a Taylor
term and all of its basic relations are binary or unary, then the variety generated
by $(A,\Pol(\A))$ is \SDmeet{}.
\end{theorem}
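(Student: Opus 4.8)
The plan is to assemble the pieces already developed in the paper into a clean contradiction-free argument. The strategy is exactly the one outlined in the last part of the text: by Theorem~\ref{thm-bw} it suffices to produce a pair of ternary and quaternary weak near-unanimity operations with the same polymer in $\Pol(\A)$; and the Corollary already exhibits such a pair $u,v$ built from $f,g,h$ whenever every pair of vertices of $\A$ is red or yellow. So the whole theorem reduces to the claim that a $3$-conservative $\A$ admitting a Taylor operation has \emph{no} blue pair. That claim is proved by contradiction, and the chain of observations in Section~4 is precisely that proof.

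First I would set up the contradiction hypothesis: suppose $\A$ admits a Taylor operation but has a blue pair $\{a,b\}$. By Proposition~\ref{propRS}, the relation $R=\{(b,b,b),(a,a,b),(a,b,a),(b,a,a)\}$ lies in the relational clone of $\A$, hence (passing to $\overline{\A}$, which has the same polymorphism clone) there is a $\CSP(\overline{\A})$ instance $I$ with three free variables realizing exactly the triples in $R$; choose $I$ with the total potato size minimal. Then I would invoke, in order: Observation~\ref{obsFour} (every potato has size $2$ or $3$, using the minority $h$ to close up a shrunk potato and singleton deletion), Observation~\ref{obsBlue} (every pair inside a potato is blue, using the absorption trick with $f$ for red pairs and with $g$ for yellow pairs to push a solution onto the absorbing element without changing which triple in $R$ it realizes), and Observation~\ref{obsTwo} (combining the two, every potato has size exactly $2$, again by applying $h$ to three solutions of $I$ and using that $h$ is minority on every blue pair). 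Finally, since every potato is a two-element set and every binary relation on a two-element set is preserved by the majority operation $m$, applying $m$ to the solutions realizing $(b,a,a)$, $(a,b,a)$, $(a,a,b)$ gives a solution realizing $m((b,a,a),(a,b,a),(a,a,b))=(a,a,a)\notin R$, contradicting that $I$ realizes exactly $R$. Hence no blue pair exists, every pair is red or yellow, and the Corollary together with Theorem~\ref{thm-bw} yields that the variety generated by $(A,\Pol(\A))$ is \SDmeet{}.

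The main obstacle — and the part that needs the most care — is the minimality-based shrinking argument in Observations~\ref{obsFour}--\ref{obsTwo}: one must check that after replacing a potato $B_j$ by a strictly smaller set (obtained either as the image of three chosen solutions under $h$, or by deleting a vertex an absorbing element has swallowed), the resulting instance still realizes \emph{all four} triples of $R$ and not fewer, so that it genuinely contradicts minimality. This is exactly where $3$-conservativity is used (to have the needed unary constraints available in $\overline{\A}$) and where the precise behaviour of $f,g,h$ on $\{a,b\}$ from Theorem~\ref{thmBulatov} matters — namely that $f=\pi_1$, $g=\pi_1$, $h=\pi_1$ respectively on the yellow/blue pairs they are not ``active'' on, so that applying them to solutions never disturbs the realized triple at $(x,y,z)$. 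Everything else (Proposition~\ref{propRS}, the final majority step, the appeal to Theorem~\ref{thm-bw}) is routine once these observations are in place.

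\begin{proof}
As noted after the Corollary, by Theorem~\ref{thm-bw} and the Corollary it suffices to show that $\A$ has no blue pair. Suppose it does, and fix a blue pair $\{a,b\}$. By Proposition~\ref{propRS} the relation $R=\{(b,b,b),(a,a,b),(a,b,a),(b,a,a)\}$ lies in the relational clone of $\A$. Let $\overline{\A}$ be $\A$ with all unary and binary relations from its relational clone added as basic relations; then $\Pol(\overline{\A})=\Pol(\A)$, so $R$ lies in the relational clone of $\overline{\A}$ and is realized, on some triple of free variables $x,y,z$, by a $\CSP(\overline{\A})$ instance $I$. Pick such an $I$ with minimal total potato size. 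By Observation~\ref{obsFour} every potato of $I$ has two or three elements; by Observation~\ref{obsBlue} every two-element subset of a potato is blue; by Observation~\ref{obsTwo} every potato has exactly two elements. Every binary relation on a two-element set is preserved by the majority operation $m$, and the unary constraints of $I$ are trivially preserved by $m$, so applying $m$ coordinatewise to solutions of $I$ realizing $(b,a,a)$, $(a,b,a)$, $(a,a,b)$ produces a solution of $I$ realizing $m((b,a,a),(a,b,a),(a,a,b))=(a,a,a)$. But $(a,a,a)\notin R$, contradicting the choice of $I$. Hence $\A$ has no blue pair, every pair of vertices of $\A$ is red or yellow, and by the Corollary and Theorem~\ref{thm-bw} the variety generated by $(A,\Pol(\A))$ is \SDmeet{}.
\end{proof}
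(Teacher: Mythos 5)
Your proposal is correct and follows the paper's argument exactly: it reduces the theorem to the nonexistence of blue pairs via Theorem~\ref{thm-bw} and the Corollary, then assembles Proposition~\ref{propRS} and Observations~\ref{obsFour}--\ref{obsTwo} around a minimal instance realizing $R$, finishing with the same majority-operation contradiction. No gaps; this is the paper's own proof reorganized into a single argument.
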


Translating our result to the CSP complexity setting, we obtain:

\begin{corollary}[Dichotomy for 3-conservative CSPs with binary relations]
If $\A=(A, R_1,\dots,R_k)$ is a finite 3-conservative relational structure that admits a Taylor
term and all of its basic relations are binary or unary, then $\CSP(\A)$ has bounded
width. If $\A$ does not admit a Taylor polymorphism, then $\CSP(\A)$ is
$\compNP$-complete.
\end{corollary}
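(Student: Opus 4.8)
The plan is to derive a contradiction from the assumption that a blue pair $\{a,b\}$ exists in a 3-conservative structure $\A$ with a Taylor term, using the standard polymorphism-to-instance dictionary. First I would observe that since the relational clone of $\A$ is closed under primitive positive definitions, it suffices to produce \emph{some} pp-definable relation that witnesses behavior no Taylor-admitting structure can tolerate. The natural candidate is the ``minority pattern'' $R=\{(b,b,b),(a,a,b),(a,b,a),(b,a,a)\}$: the projections $\pi_1,\pi_2,\pi_3$ together with the minority polymorphism on $\{a,b\}$ force all four tuples into the set $\{(t(a,a,b),t(a,b,a),t(b,a,a)):t\in\Pol_3(\A)\}$, which is invariant under $\Pol(\A)$ and contained in $\{a,b\}^3$ because $\{a,b\}$ is a subalgebra. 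So the first key step is to show this set equals exactly $R$ --- that none of the other four tuples $(a,a,a)$, $(b,b,a)$, $(b,a,b)$, $(a,b,b)$ can appear. Each extra tuple would let one combine $t$ with $f$, $g$, or $h$ from the Three Operations Proposition (Theorem~\ref{thmBulatov}) to manufacture a semilattice or majority operation on $\{a,b\}$, contradicting blueness.

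Next I would pass to the combinatorial side. Working in $\overline{\A}$ (the structure enriched with all unary and binary relations of the relational clone, so the induction has room to breathe), I would pick a $\CSP(\overline{\A})$ instance $I$ with free variables $x,y,z$ realizing exactly the triples of $R$, chosen to \emph{minimize the total potato size}. The heart of the argument is then a sequence of shrinking lemmas showing the minimal $I$ must be extremely rigid. Using the absorption trick --- if $\{c,d\}$ is red or yellow then $\{d\}$ absorbs $\{c,d\}$, so applying $f(r,s)$ or $g(r,s,s)$ replaces any solution through $c$ by one through $d$ without disturbing the $(x,y,z)$-triple it realizes --- I would force every two-element sub-pair appearing inside a potato to be blue. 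A similar but slightly more delicate argument handles potatoes of size $3$: picking solutions $s_{aab},s_{aba},s_{baa}$ realizing the three ``twisted'' triples and applying the minority $h$ (which behaves as minority on each blue pair) collapses a putative three-element potato, again contradicting minimality. Singletons and four-element potatoes are dispatched even more easily. The conclusion of this phase is that in the minimal $I$ every potato has exactly two elements.

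The final step is the payoff: every binary relation on a two-element set is preserved by the majority operation $m$ (this is an elementary check --- a binary relation on $\{0,1\}$ is a conjunction of at most two literals per coordinate pattern, all of which $m$ respects). Hence if $I$ uses only two-element potatoes linked by binary constraints, $m$ applied coordinatewise to the three solutions realizing $(b,a,a)$, $(a,b,a)$, $(a,a,b)$ yields a genuine solution of $I$ realizing $m((b,a,a),(a,b,a),(a,a,b))=(a,a,a)$ --- a triple not in $R$, contradiction. Combined with the Corollary handling the all-red-or-yellow case (which directly exhibits ternary and quaternary WNUs with a common polymer), Theorem~\ref{thm-bw} then gives congruence meet semidistributivity, proving the theorem.

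I expect the main obstacle to be the potato-size reduction arguments, specifically the bookkeeping needed to verify that shrinking a potato $B_j$ genuinely preserves \emph{all} of $R$ and not just the obvious tuples: one must be careful that when $\{s_{aab}(j),s_{aba}(j),s_{baa}(j)\}\neq B_j$ the replacement instance still realizes $(b,b,b)$, which requires the minority $h$ to map the three retained solutions to a fourth solution through the right potato element. Making the enriched structure $\overline{\A}$ rich enough to permit these unary restrictions while keeping the reductions valid is the technically fussy point; the algebraic steps (constructing $R$, ruling out extra tuples, the final majority argument) are comparatively routine.
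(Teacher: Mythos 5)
Your proposal follows essentially the same route as the paper: it reduces the corollary to the main theorem via Theorem~\ref{thm-bw}, and proves that theorem exactly as the paper does (the pp-definable minority relation $R$, the minimal instance over $\overline{\A}$, the shrinking arguments forcing all-blue two-element potatoes, and the final majority contradiction). The only thing left implicit is the $\compNP$-complete half of the dichotomy, which, as in the paper, is just the standard cited fact for core structures with all constants and requires no new argument.
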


Note, however, that our result can not be generalized to relational structures
with ternary basic relations. For example the structure $\A$ on $\{0,1\}$ with all
the unary relations together with the relation $S=\{(x,y,z):x+y+z=0\pmod 2\}$ has
$\Pol(\A)$ consisting of all the idempotent linear mappings over ${\mathbb Z}_2$, so
$\Pol(\A)$ can not contain any quaternary WNU.

Our result is also false for 2-conservative relational structures with arity of
all relations at most two. Let $\A$ be the relational structure on $\{0,1\}^2$ with 
all unary relations of size one and two, plus the 
three equivalence relations $\alpha$, $\beta$, and $\gamma$. These three
equivalences correspond to the following partitions of $\{0,1\}^2$:
\begin{align*}
\alpha\dots&\{\{(0,0),(1,1)\},\{(0,1),(1,0)\}\},\\
\beta\dots&\{\{(0,0),(0,1)\},\{(1,0),(1,1)\}\},\\
\gamma\dots&\{\{(0,0),(1,0)\},\{(0,1),(1,1)\}\}.
\end{align*}
Observe that $\Pol(\A)$ contains the
idempotent Taylor term $p(x,y,z)=x+y+z$, where addition is taken componentwise
and modulo 2 (i.e. like in ${{\mathbb Z}_2}^2$). However,
the variety generated by $\algA=(\{0,1\}^2,\Pol(\A))$ is definitely not \SDmeet{} since
$\alpha,\beta,\gamma$ are congruences of $\algA$ 
such that 
 $\alpha\AND\beta=\alpha\AND\gamma=0_{\algA},$ while
$\alpha\AND(\beta\vee\gamma)=\alpha$.

\section{Closing remarks}
It is remarkably difficult to obtain a digraph that would have a tractable CSP,
yet it would not have bounded width (though such beasts do exist; see the
original argument in~\cite{atserias-bw}, or the construction in
\cite{todd-et-al}). We give a partial explanation for this phenomenon: such
digraphs needs to admit some nonconservative binary or ternary operation, while
avoiding ternary and quaternary WNUs.

As we have seen, our result about 3-conservative binary structures is quite
tight. However, some generalizations might still be possible. At the moment, we
do not know if our result holds for 2-conservative digraphs and if
Theorem~\ref{thmTaylorSDmeet} holds when we drop the finiteness condition. We suspect
that the answer to both questions will be negative, but the counterexamples
might turn out to be illuminating.

\bibliographystyle{spmpsci}
\bibliography{citations}
\end{document}